\patchcmd{\section}{\scshape}{\bfseries}{}{}
\renewcommand{\@secnumfont}{\bfseries}
\newtheorem{theorem}{Theorem}
\theoremstyle{definition}
\theoremstyle{plain}
\newcommand{\beqlbl}{\begin{equation}}
\newcommand{\eeqlbl}{\end{equation}}
\renewcommand{\P}{\mathbb{P}}
\newcommand{\E}{\mathbb{E}}
\newcommand{\bP}{\mathbf{P}}
\newcommand{\Var}{\mathrm{Var}}
\tikzstyle{vertex}=[circle, draw, inner sep=0pt, minimum size=6pt]
\tikzstyle{Vertex}=[circle, draw, inner sep=0pt, minimum size=14pt]
\tikzstyle{Vertexc}=[circle, draw, inner sep=0pt, minimum size=14pt, fill=blue!30]
\tikzstyle{vertexc}=[circle, draw, inner sep=0pt, minimum size=6pt, fill=red!40]
\tikzstyle{vertexcg}=[circle, draw, inner sep=0pt, minimum size=6pt, fill=green!70!black]
\newcommand{\beq}{\begin{equation*}}
\newcommand{\eeq}{\end{equation*}}
\newcommand{\ba}{\begin{align*}}
\newcommand{\ea}{\end{align*}}
\newcommand{\matbegin}[1]{\left (  \begin{array} {#1} }
\newcommand{\matend}{ \end{array} \right ) }
\newcommand{\avn}[1]{\textbf{A\!v}_n(#1)}
\newcommand{\ivn}[1]{\textbf{I\!v}_n(#1)}
\newcommand{\fp}{\mathrm{fp}}
\begin{document}

\title[Fixed Point Biased Involutions]{Limit Theorems for Fixed Point Biased Pattern Avoiding Involutions}

\author[Jungeun~Park]{ \ Jungeun~Park}
\address{Department of Mathematical Sciences, University of Delaware}
\email{jungeun@udel.edu}

\author[Douglas~Rizzolo]{ \ Douglas~Rizzolo}
\address{Department of Mathematical Sciences, University of Delaware}
\email{drizzolo@udel.edu}

\vskip1.3cm

\begin{abstract}
We study fixed point biased involutions that avoid a pattern.  For every pattern of length three we obtain limit theorems for the asymptotic distribution of the (appropriately centered and scaled) number of fixed points of a random fixed point biased involution avoiding that pattern.  When the pattern being avoided is either $321$, $132$, or $213$, we find a phase transition depending on the strength of the bias.  We also obtain a limit theorem for distribution of fixed points when the pattern is $123\cdots k(k+1)$ for any $k$ and partial results when the pattern is $(k+1)k\cdots 321$. 
\end{abstract}

\maketitle
\section{Introduction}
If $\pi$ and $\sigma$ are permutations of $[n]=\{1,\dots, n\}$ and $[m]$ respectively with $n>m$, then $\pi$ is said to contain $\sigma$ if there exist indices $i_1<\dots< i_m$ such that for all $j<k$, $\pi(i_j)<\pi(i_k)$ if and only if $\sigma(j)<\sigma (k)$.  The permutation $\pi$ is said to avoid $\sigma$ if it does not contain $\sigma$.  Given a permutation $\sigma$, we let $\avn{\sigma}$ be the set of permutations of $[n]$ that avoid $\sigma$ and $\ivn{\sigma}$ be the subset of $\avn{\sigma}$ of involutions.  If $\pi$ is a permutation of $[n]$, we call $|\pi|=n$ the length of the permutation.

Recently there has been considerable interest in the study of random pattern avoiding permutations \cite{albert2024logical, bassino2018brownian, FRL, hrs1, janson2014patterns, consecutivepatterns, borga2022scaling}, and the fixed points of pattern avoiding permutations \cite{CP25, drs, mrs, hrs2, hrs3, elizalde2004bijections}. Pattern avoiding involutions specifically have drawn attention \cite{elizalde2004bijections, mrs, simionschmidt, bona2016pattern, drs} and are interesting because they shed light on the relationship between pattern avoidance and the structure of the symmetric group.  In \cite{mrs}, the asymptotic distribution of the number of fixed points for uniformly random pattern avoiding involutions was studied for patterns of length $3$ and monotone patterns.  In this paper we extend these results to fixed point biased involutions. 

Fixed point biased measures on pattern avoiding permutations were recently introduced in \cite{CP25}.  For a permutation $\pi$ let $\fp(\pi)$ be the number of fixed points in $\pi$ and $q\in (0,\infty)$, \cite{CP25} introduced the measures $\tilde \P^{\sigma,q}_{n}$ on $\avn{\sigma}$ defined by
\[ \tilde \P^{q,\sigma}_{n}(\pi) \propto q^{\fp(\pi)} \]
and studied the distribution of $\fp(\pi)$ under $\tilde \P^{q,\sigma}_{n}$ as $n\to\infty$ when $|\sigma|=3$.  They were able to obtain limiting distributions except in the case $\sigma \in \{231, 312\}$. 

We study the analogous problem for fixed point biased involutions.  We define measures $\P^{\sigma,q}_{n}$ on $\ivn{\sigma}$ by
\[ \P^{q,\sigma}_{n}(\pi) \propto q^{\fp(\pi)} \]
and study the distribution of $\fp(\pi)$ under $\P^{q,\sigma}_{n}$ as $n\to\infty$ when either $|\sigma|=3$ or $\sigma$ is a monotone pattern.  The asymptotic distribution of $\fp(\pi)$ under $\P^{1,\sigma}_{n}$, which is the uniformly random case, for these choices of $\sigma$ was studied in \cite{mrs}.  Our results extend those results to the fixed point biased case.  In contrast to \cite{CP25}, we are able to obtain results for all patterns of length $3$.  Similar to \cite{CP25}, when $\sigma\in \{321, 132, 213\}$ we find that there is a phase transition in the limiting distribution from Negative Binomial, to Rayleigh, to normal depending on the strength of the bias, see Theorem \ref{thm phase} below, however, the location of the phase transition is different. 

\section{Main Results}
The following four theorems are our main results.  

\begin{theorem}\label{thm inc}
Fix $k\geq 1$, $q>0$, $\sigma =123\cdots k(k+1)$. Suppose that $\Pi_n$ is a random element of $\ivn{\sigma}$ distributed according to $ \P^{q,\sigma}_{n}$.  Then
\[ \fp(\Pi_{2n}) \to_d X^q_{even}\]
and
\[  \fp(\Pi_{2n-1}) \to_d X^q_{odd}\]
where $X^q_{even}$ has probability mass function
\[ \P(X^q_{even} = i ) = \begin{cases} \frac{q^i}{2^{k-2} ((q+1)^k+(q-1)^k)} {k \choose i} & i \textrm{ is even} \\ 0 &  i \textrm{ is odd}. \end{cases}\]
and 
\[ \P(X^q_{odd} = i ) = \begin{cases} \frac{q^i}{2^{k-2} ((q+1)^k+(q-1)^k)} {k \choose i} & i \textrm{ is odd} \\ 0 &  i \textrm{ is even}. \end{cases}\]
\end{theorem}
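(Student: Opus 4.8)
The plan is to transport the problem through the Robinson--Schensted correspondence to a question about random standard Young tableaux with a bounded number of columns, and then run a Laplace-type analysis of a weighted sum of hook-length numbers while keeping track of the parities of the column lengths.

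\emph{Step 1 (RSK reduction).} Robinson--Schensted sends an involution $\pi$ of $[n]$ to a single standard Young tableau of some shape $\lambda\vdash n$, and the length of the longest increasing subsequence of $\pi$ equals $\lambda_1$; hence $\pi\in\ivn{\sigma}$, $\sigma=123\cdots k(k+1)$, exactly when $\lambda$ has at most $k$ columns. By Sch\"{u}tzenberger's theorem $\fp(\pi)$ equals the number of odd-length columns of $\lambda$, i.e.\ the number of odd parts of the conjugate partition $\mu:=\lambda'$, which has at most $k$ parts. Writing $f^{\mu}$ for the number of standard Young tableaux of shape $\mu$ (equal to $f^{\lambda}$) and $o(\mu)$ for the number of odd parts of $\mu$, the measure $\P^{q,\sigma}_{n}$ pushes forward to the measure on partitions $\mu=(\mu_1\ge\cdots\ge\mu_k\ge 0)$, $|\mu|=n$, with mass proportional to $f^{\mu}q^{o(\mu)}$, and $\fp$ becomes $o(\mu)$. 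Two elementary observations do much of the work: a shape with $\le k$ parts has $\le k$ odd parts, so $\fp\le k$ and $\fp$ is supported on the finite set $\{0,1,\dots,k\}$; and non-fixed points of an involution come in pairs, so $\fp\equiv n\pmod 2$. In particular, convergence in distribution here amounts to convergence of $\P^{q,\sigma}_{n}(\fp=i)$ for each admissible $i$.

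\emph{Step 2 (localization on near-rectangular shapes).} Use the $\beta$-number form of the hook length formula: with $l_i=\mu_i+k-i$ (so $l_1>\cdots>l_k\ge 0$ and $\sum_i l_i=n+\binom{k}{2}$),
\[
f^{\mu}=\frac{n!}{\prod_{i=1}^{k}l_i!}\prod_{1\le i<j\le k}(l_i-l_j).
\]
Put $l_i=m+y_i$ with $m=\lfloor n/k\rfloor$; then $\sum_i y_i=\binom{k}{2}+(n\bmod k)$ is bounded, $l_i-l_j=y_i-y_j$, and Stirling's formula gives $\log\prod_i l_i!=C_n+\tfrac{1}{2m}\sum_i y_i^2+o(1)$ uniformly for $\max_i|y_i|\le C\sqrt n$, where $C_n$ depends only on $n$. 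Hence, up to a factor independent of the shape, the weight $f^{\mu}q^{o(\mu)}$ is comparable to $q^{o(\mu)}\big(\prod_{i<j}(y_i-y_j)\big)\exp\!\big(-\tfrac{k}{2n}\sum_i y_i^2\big)$, which is summable; combined with a crude bound on shapes far from the rectangle, this shows that all but a vanishing fraction of the total mass sits on shapes with $\mu_i=\tfrac{n}{k}+O(\sqrt n)$ for every $i$. Since $q^{o(\mu)}$ is bounded between $\min(1,q^k)$ and $\max(1,q^k)$, the $q$-reweighting does not affect which shapes are negligible, so this localization is the same one underlying the $q=1$ analysis of \cite{mrs} and may be imported from there.

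\emph{Step 3 (equidistribution of parities and the limit).} On the dominant window the $l_i$ range over intervals of order $\sqrt n$ on which the weight is slowly varying at unit scale, so for any prescribed parity pattern of $(\mu_1,\dots,\mu_k)$ the corresponding subsum is an arithmetic-progression subsum of a slowly varying $(k-1)$-dimensional lattice sum; a Riemann-sum argument shows it converges to $2^{-(k-1)}$ times the full sum whenever the pattern is consistent with $\sum_i\mu_i\equiv n\pmod 2$, and to $0$ otherwise. (The Vandermonde factor and the constraint $\mu_1\ge\cdots\ge\mu_k$ are continuous at scale $\sqrt n$ and so contribute symmetrically across the admissible patterns.) There are $2^{k-1}$ admissible patterns, of which exactly $\binom{k}{i}$ have $i$ odd coordinates, and all are realized near the rectangle; therefore
\[
\P^{q,\sigma}_{n}(\fp=i)\ \longrightarrow\ \frac{q^{i}\binom{k}{i}}{\sum_{j\equiv n\,(2)}q^{j}\binom{k}{j}}\qquad\text{for }i\equiv n\pmod 2,
\]
and $0$ otherwise. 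Since $\sum_{j\text{ even}}q^{j}\binom{k}{j}=\tfrac12\big((1+q)^k+(1-q)^k\big)$ and $\sum_{j\text{ odd}}q^{j}\binom{k}{j}=\tfrac12\big((1+q)^k-(1-q)^k\big)$, this is precisely the stated law of $X^q_{even}$ for $n$ even and of $X^q_{odd}$ for $n$ odd. (Equivalently, because $\fp$ has finite support one can bypass Steps 2--3 and tilt the uniform case: $\P^{q,\sigma}_{n}(\fp=i)=q^{i}\P^{1,\sigma}_{n}(\fp=i)\big/\sum_j q^{j}\P^{1,\sigma}_{n}(\fp=j)$, and passing to the limit via the $q=1$ result of \cite{mrs} yields the same answer.)

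\emph{Main obstacle.} The technical heart is Step 2 together with the equidistribution assertion in Step 3: one needs the hook-length asymptotics with enough uniformity to (i) discard shapes outside an $O(\sqrt n)$-neighborhood of the rectangle and (ii) establish a local-limit-type statement for the joint distribution of the residues $(\mu_i\bmod 2)_{i=1}^{k}$ on that neighborhood. If the quantitative estimates behind the $q=1$ case of \cite{mrs} are available, the boundedness of $q^{o(\mu)}$ reduces both points to that case and the argument becomes short.
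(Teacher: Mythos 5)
Your argument is correct in substance, but your main line (Steps 1--3) takes a genuinely different and much heavier route than the paper, whose entire proof is the observation you relegate to your final parenthesis: since $\fp(\Pi_n)\le k$, the tilting identity $\P^{q,\sigma}_n(\fp=i)=q^i\,\P^{1,\sigma}_n(\fp=i)\big/\sum_j q^j\,\P^{1,\sigma}_n(\fp=j)$ (Equation \eqref{eq dens}) combined with the $q=1$ limit law of \cite[Theorem 2]{mrs} gives the theorem immediately, boundedness ensuring that the finitely many tilted probabilities converge and that the normalizing sum stays away from $0$. Your Steps 1--3 instead re-derive the uniform-case input of \cite{mrs} from scratch: RSK plus Sch\"utzenberger reduces the problem to the parities of the at most $k$ parts of the conjugate shape, and a Laplace analysis of the hook-length weight plus a unit-scale local-limit argument yields equidistribution over the $2^{k-1}$ admissible parity patterns. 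That buys self-containedness (no appeal to \cite{mrs} would be needed), but the two genuinely hard steps --- localization to an $O(\sqrt n)$ window around the rectangle and the slow-variation estimate justifying parity equidistribution in the presence of the Vandermonde factor --- are only sketched or ``imported,'' as you acknowledge; since those are exactly the estimates \cite{mrs} already supplies, the long route adds work without adding generality unless you carry them out in full. One caveat on your final identification: the limit you obtain, $q^i\binom{k}{i}\big/\sum_{j\equiv n\,(2)}q^j\binom{k}{j}$, is the correctly normalized tilt of the conditioned Binomial$(k,1/2)$ law, but it does not literally match the printed constant $2^{k-2}\left((q+1)^k+(q-1)^k\right)$ in the theorem statement (for $k=2$, $q=1$ the printed masses sum to $1/2$); the printed normalization appears to be a misprint, and your formula is the one consistent with both routes, so your claim of ``precise'' agreement should be read modulo that correction.
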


\begin{theorem} \label{thm dec}
Fix $k\geq 1$, $0<q\leq 1$, $\sigma =(k+1)k\cdots 321$.  Let $q(n) = q^{\sqrt{k/n}}$ Suppose that $\Pi_n$ is a random element of $\ivn{\sigma}$ distributed according to $ \P^{q(n),\sigma}_{n}$.  Let $M$ be a random matrix drawn for the $k\times k$ Gaussian Orthogonal Ensemble conditioned to have trace $0$ and let $\Lambda_1\geq \Lambda_2  \geq \cdots \geq \Lambda_k$.  Define
\[ S_k = \sum_{j=1}^k (-1)^{j+1} \Lambda_j,\]
and let $\P_k$ be the distribution of $\Lambda^k$.  Let $X_k$ be a random variable with distribution $\bP_k$ that is absolutely continuous with respect to $\P_k$ with density 
\[ \bP_k(ds) =\frac{q^{s}}{ \E\left[ q^{S_k}\right]} d\P_k(ds).\] 
\begin{enumerate}
\item If $k$ is even then 
\[ \sqrt{\frac{k}{n}} \fp(\Pi_n) \to_d X_k. \]
\item If $k$ is odd then 
\[ \sqrt{\frac{k}{n}}\left( \fp(\Pi_n) -\frac{n}{k}\right)\to_d X_k. \]
\end{enumerate}
\end{theorem}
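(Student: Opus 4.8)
\emph{Proof proposal.} The plan is to use the Robinson--Schensted correspondence to turn the statement into one about the limiting shape of a large random standard Young tableau, to invoke the $q=1$ case (which is contained in \cite{mrs}), and then to recover the biased limit as an exponential tilt of the unbiased one. For a partition $x=(x_1\geq\cdots\geq x_k)$ write $S_k(x):=\sum_{i=1}^k(-1)^{i+1}x_i$, so that the random variable called $S_k$ in the statement is $S_k(\Lambda)$.

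First I would recall that RSK sends an involution $\pi$ of $[n]$ to a single standard Young tableau $P(\pi)$, giving a bijection between involutions of $[n]$ and standard Young tableaux with $n$ cells; under this bijection the length of the longest decreasing subsequence of $\pi$ equals the number of rows of the shape $\lambda(\pi)$, so (since $\sigma=(k+1)k\cdots 321$) we have $\pi\in\ivn{\sigma}$ if and only if $\lambda(\pi)$ has at most $k$ rows, and by Sch\"utzenberger's theorem $\fp(\pi)$ equals the number of odd-length columns of $\lambda(\pi)$. A short computation with conjugate partitions (using $\sum_{i=1}^{m}(-1)^{i+1}=1$ if $m$ is odd and $0$ if $m$ is even) shows that, writing $\lambda=(\lambda_1\geq\cdots\geq\lambda_k\geq 0)$, the number of odd columns is exactly $S_k(\lambda)$. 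Hence under $\P^{q(n),\sigma}_{n}$ the shape $\lambda(\Pi_n)$ has distribution proportional to $f^\lambda\,q(n)^{S_k(\lambda)}$ over partitions of $n$ with at most $k$ rows, where $f^\lambda$ is the number of standard Young tableaux of shape $\lambda$.

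Next I would set up the rescaling. Since $\sum_i\lambda_i=n$ and the rows are all of length $n/k+O(\sqrt{n/k})$, I would put $Y^{(n)}_i:=\sqrt{k/n}\,(\lambda_i(\Pi_n)-n/k)$, a random vector in $\{y_1\geq\cdots\geq y_k,\ \sum_i y_i=0\}$; then $\sqrt{k/n}\,\fp(\Pi_n)=S_k(Y^{(n)})+c_{n,k}$, where $c_{n,k}=\sqrt{n/k}$ if $k$ is odd and $c_{n,k}=0$ if $k$ is even, which is exactly where the two cases of the theorem come from. Under the uniform law $\P^{1,\sigma}_{n}$ one has $S_k(Y^{(n)})\to_d S_k(\Lambda)$: this is the $q=1$ case of the theorem, proved in \cite{mrs} via a local limit theorem for $f^\lambda$ (hook length formula and Stirling's approximation), and the same argument gives the joint convergence $Y^{(n)}\to_d\Lambda$, the ordered spectrum of a $k\times k$ GOE matrix conditioned to have trace $0$. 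The key observation is then that $q(n)^{S_k(\lambda)}=q^{\sqrt{k/n}\,S_k(\lambda)}=q^{c_{n,k}}\,q^{S_k(Y^{(n)})}$ with a $\lambda$-independent prefactor, so that $\P^{q(n),\sigma}_{n}$, transported to $Y^{(n)}$, is precisely the exponential tilt by $q^{S_k(Y^{(n)})}$ of the law of $Y^{(n)}$ under $\P^{1,\sigma}_{n}$. Therefore, for bounded continuous $h$,
\[ \E^{\P^{q(n),\sigma}_{n}}[\,h(S_k(Y^{(n)}))\,] = \frac{\E^{\P^{1,\sigma}_{n}}[\,q^{S_k(Y^{(n)})}\,h(S_k(Y^{(n)}))\,]}{\E^{\P^{1,\sigma}_{n}}[\,q^{S_k(Y^{(n)})}\,]}, \]
and if $\{q^{S_k(Y^{(n)})}\}_n$ is uniformly integrable under $\P^{1,\sigma}_{n}$ then, combining this with $S_k(Y^{(n)})\to_d S_k(\Lambda)$, the right-hand side tends to $\E[q^{S_k(\Lambda)}h(S_k(\Lambda))]/\E[q^{S_k(\Lambda)}]=\E[h(X_k)]$ by the definition of $\bP_k$. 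Since $\sqrt{k/n}\,\fp(\Pi_n)-c_{n,k}=S_k(Y^{(n)})$, this gives both parts of the theorem.

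I expect the uniform integrability to be the technical heart of the argument, and it is where the hypothesis $0<q\leq1$ is genuinely convenient. When $k$ is even, $S_k(Y^{(n)})=\sum_{j=1}^{k/2}(Y^{(n)}_{2j-1}-Y^{(n)}_{2j})\geq0$, so $q^{S_k(Y^{(n)})}\leq1$ and there is nothing to prove. When $k$ is odd one has only $S_k(Y^{(n)})\geq Y^{(n)}_k=\sqrt{k/n}\,(\lambda_k(\Pi_n)-n/k)$, which is generally negative (note $\lambda_k(\Pi_n)\leq n/k$ by pigeonhole), so $q^{S_k(Y^{(n)})}\leq e^{|\log q|\,\sqrt{k/n}\,(n/k-\lambda_k(\Pi_n))}$, and one needs $\sup_n\E^{\P^{1,\sigma}_{n}}[e^{(1+\varepsilon)|\log q|\,\sqrt{k/n}\,(n/k-\lambda_k(\Pi_n))}]<\infty$ for some $\varepsilon>0$; equivalently, a sub-Gaussian lower-tail estimate $\P^{1,\sigma}_{n}(\lambda_k(\Pi_n)\leq n/k-s\sqrt{n/k})\leq Ce^{-cs^2}$, uniform in $n$ and $s$, for the shortest row of a uniformly random at-most-$k$-row standard Young tableau of size $n$. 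I would obtain this either by a bounded-difference (Azuma) estimate for the Doob martingale of $\lambda_k$ along a growth process for the tableau (insertion of one cell changes $\lambda_k$ by at most $1$, after a monotone coupling), or more directly by combining large-deviation bounds for $f^\lambda$ from the hook length formula with a union bound over the $O(n^{k-1})$ possible shapes. Once that estimate is in hand the tilt passes to the limit and the proof is complete; the restriction $q\leq1$ is precisely what keeps this last input one-sided and elementary (and reflects why only a partial result is claimed).
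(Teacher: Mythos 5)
Your proposal is correct and, at its core, it is the same argument as the paper's: both rest on the absolute-continuity relation $\P^{q(n),\sigma}_n(d\pi)\propto q(n)^{\fp(\pi)}\,\P^{1,\sigma}_n(d\pi)$ together with the $q=1$ limit theorem of \cite{mrs}, so the limit law is obtained as an exponential tilt of the GOE-eigenvalue limit. The differences are in what is spelled out. The RSK/Sch\"utzenberger reduction (fixed points $=$ odd columns $=$ alternating row sum) that you rederive is already the content of \cite{mrs} and need not be reproved here. More substantively, the paper disposes of the tilting step in one line by noting that $x\mapsto q^x$ is bounded and continuous on $[0,\infty)$; that remark settles the even-$k$ case, where the tilt acts on the nonnegative converging quantity $\sqrt{k/n}\,\fp(\Pi_n)$, but for odd $k$ the converging quantity is the centered variable $Y^{(n)}$, the constant $q^{\sqrt{n/k}}$ cancels in the ratio, and the effective tilt $q^{Y^{(n)}}$ is unbounded (it can be as large as $q^{-\sqrt{n/k}}$), so a plain weak-convergence argument gives a $0/0$ ratio. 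You correctly identify that one then needs uniform integrability of $q^{Y^{(n)}}$ under $\P^{1,\sigma}_n$, i.e.\ a lower-tail estimate for the shortest row $\lambda_k$ decaying faster than exponentially in $s$ at scale $\sqrt{n/k}$, and your two suggested routes (a bounded-difference martingale bound along a growth process, or hook-length large-deviation bounds plus a union bound over shapes; alternatively one can extract it from the local limit theorem underlying \cite{mrs}) are reasonable ways to supply it, though you leave this estimate as a sketch rather than a proof. So your write-up is the same approach, with the added value of flagging and outlining the uniform-integrability input that the paper's one-line proof leaves implicit in the odd-$k$ case.
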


Note that Theorem \ref{thm dec} is qualitatively different from Theorem \ref{thm inc} and the results of \cite{CP25} because $q(n)\to 1$ as $n\to \infty$ as opposed to being fixed.  It would be interesting to investigate what happens when $q$ is fixed, at the moment we are only able to do this in the case when $k=2$.

\begin{theorem}\label{thm phase}
Fix $\sigma\in \{321, 132, 213\}$. Suppose that $\Pi_n$ is a random element of $\ivn{\sigma}$ distributed according to $\P^{q,\sigma}_n$.
\begin{enumerate}
\item If $0<q<1$ then
\[ \fp(\Pi_{2n})\to_d N_1\]
where $N_1$ is distributed like a Negative Binomial random variable with parameters $p=1-q$ and $r=2$ conditioned to be even and
\[ \fp(\Pi_{2n+1})\to_d N_2 \]
where $N_2$ is distributed like a Negative Binomial random variable with parameters $p=1-q$ and $r=2$ conditioned to be odd.
\item If $q=1$ then 
\[ \sqrt{\frac{1}{n}} \fp(\Pi_n) \to_d \frac{1}{\sqrt{2}}X_2,\]
where $X_2$ is as defined in Theorem \ref{thm dec}.  In fact, $X_2/\sqrt{2}$ has a Rayleigh$(1)$ distribution.
\item If $q>1$ then
\[ \frac{ \fp(\Pi_n) - \frac{q^2-1}{q^2+1}n}{\sqrt{ \frac{4q^2}{q^2+1}n}} \rightarrow_d Z\]
where $Z$ has a standard normal distribution.
\end{enumerate}
\end{theorem}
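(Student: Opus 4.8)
The plan is to reduce all three patterns to $\sigma=321$ and then analyze one explicit generating function. The reverse‑complement map $\pi\mapsto\pi^{rc}$, $\pi^{rc}(i)=n+1-\pi(n+1-i)$, carries $\ivn{132}$ bijectively onto $\ivn{213}$, preserves being an involution, and preserves $\fp$ (since $i$ is fixed by $\pi^{rc}$ iff $n+1-i$ is fixed by $\pi$), so it suffices to treat $\sigma\in\{321,132\}$; here I would use the fact (readable from \cite{mrs}, or re‑derivable) that $\ivn{321}$ and $\ivn{132}$ have the same refined enumeration by fixed points. Concretely, set $F_n(q)=\sum_{\pi\in\ivn{321}(n)}q^{\fp(\pi)}=\sum_j I_{n,j}q^j$; then $I_{n,j}=0$ unless $n\equiv j\pmod 2$, and otherwise $I_{n,j}=\frac{j+1}{n+1}\binom{n+1}{(n-j)/2}$, equivalently $\sum_n F_n(q)x^n=(1-qx-x^2C(x^2))^{-1}$ with $C(y)=\frac{1-\sqrt{1-4y}}{2y}$. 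This comes from the structure of $321$‑avoiding involutions — their $2$‑cycles $(a_i<b_i)$ are non‑nesting and no fixed point lies strictly between some $a_i$ and $b_i$ — which encodes $\pi$ as a word over $\{E,D,F\}$ (excedance/deficiency/fixed point) with nonnegative prefix sums, final sum $0$, and $F$'s only at height $0$; decomposing into blocks (a lone $F$, weight $qx$; or a prime Dyck path of semilength $m$, weight $C_{m-1}x^{2m}$) gives the displayed generating function. The three regimes of Theorem~\ref{thm phase} correspond exactly to $B(1/2,q)=(q+1)/2$ being $<,=,>1$, where $B(x,q)=qx+x^2C(x^2)$ is the block series.

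For part (1) ($0<q<1$, the subcritical case) I would compute $\P^{q,\sigma}_n(\fp=j)=q^jI_{n,j}/F_n(q)=\frac{(j+1)q^j\binom{n+1}{(n-j)/2}}{\sum_i(i+1)q^i\binom{n+1}{(n-i)/2}}$. For fixed $j$ of the correct parity, $\binom{n+1}{(n-j)/2}/\binom{n+1}{\lfloor(n+1)/2\rfloor}\to1$ by the local central limit theorem for binomial coefficients, so each term tends to $(j+1)q^j/\sum_{i\equiv n(2)}(i+1)q^i$, a finite sum since $q<1$; a uniform domination of the summands (numerator $\le(j+1)q^j\binom{n+1}{\lfloor(n+1)/2\rfloor}$, denominator $\gtrsim\binom{n+1}{\lfloor(n+1)/2\rfloor}$) together with dominated convergence promotes this to convergence in law. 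Since $(j+1)q^j$ restricted to one parity class is the pmf of Negative Binomial$(p=1-q,r=2)$ conditioned to that parity — the normalizers being $\sum_{i\ \mathrm{even}}(i+1)q^i=(1+q^2)/(1-q^2)^2$ and its odd counterpart — this identifies the limit as $N_1$ for even $n$ and $N_2$ for odd $n$.

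For part (3) ($q>1$, supercritical) the efficient tool is $\E[e^{s\fp(\Pi_n)}]=F_n(qe^s)/F_n(q)$. For $Q>1$ the series $\sum_n F_n(Q)x^n=\frac{C(x^2)}{1-QxC(x^2)}$ has a simple pole at $\rho(Q)=\frac{Q}{Q^2+1}\in(0,1/2)$, dominating the branch point at $\pm1/2$, so $F_n(Q)\sim A(Q)\rho(Q)^{-n}$ with $\rho(Q)^{-1}=Q+Q^{-1}$ and $A$ analytic and nonzero near $Q=q$; hence $\E[e^{s\fp(\Pi_n)}]\sim\frac{A(qe^s)}{A(q)}e^{nh(s)}$ with $h(s)=\log\frac{qe^s+q^{-1}e^{-s}}{q+q^{-1}}$, an analytic quasi‑power. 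Hwang's quasi‑powers theorem then yields a Gaussian limit after centering by $nh'(0)=\frac{q^2-1}{q^2+1}n$ and rescaling by $\sqrt{nh''(0)}$ (with $h''(0)>0$ for $q\neq1$). Part (2) ($q=1$) is the boundary case where this pole merges with the branch point and the limit is non‑Gaussian; it is precisely the $k=2$, $q(n)\equiv1$ instance of Theorem~\ref{thm dec} (note $(k+1)k\cdots321=321$), giving $\sqrt{2/n}\,\fp(\Pi_n)\to_d X_2$, i.e. $\sqrt{1/n}\,\fp(\Pi_n)\to_d X_2/\sqrt2$, and a direct computation with the $2\times2$ trace‑$0$ GOE (whose positive eigenvalue is $\tfrac1{\sqrt2}\sqrt{N_1^2+N_2^2}$ for i.i.d.\ standard normals $N_1,N_2$) shows $X_2/\sqrt2$ is Rayleigh$(1)$.

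I expect the main obstacle to be the combinatorial input: the structural characterization of $321$‑avoiding involutions, the exact formula for $I_{n,j}$, and above all the refined equidistribution of fixed points over $\ivn{321}$, $\ivn{132}$, $\ivn{213}$ — without it one must handle the clumsier recursive decomposition of $132$‑avoiding involutions directly. A secondary subtlety arises in part (1): showing that the limit is a \emph{parity‑conditioned} Negative Binomial, not merely some discrete law, and justifying the exchange of the limit with the infinite sum defining $F_n(q)$. Parts (2)–(3) are then comparatively routine, modulo the appeal to Theorem~\ref{thm dec}.
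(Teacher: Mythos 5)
Your proposal is correct and lands on the same limit laws, but by a partly different route. For part (1) the paper works directly with the bivariate generating function $G(z,q)=\frac{2}{1-2qz+\sqrt{1-4z^2}}$, quoted from \cite{mrs} for all three patterns (so the reverse--complement reduction and your re-derivation via the non-nesting arc/ballot-path encoding are not needed, though they are correct and make the argument self-contained), and it extracts coefficient asymptotics by singularity analysis at the two dominant square-root singularities $z=\pm 1/2$ (\cite[Thm.~VI.5]{FlSe09}), then takes limits of probability generating functions; you instead use the exact ballot-number coefficients $I_{n,j}=\frac{j+1}{n+1}\binom{n+1}{(n-j)/2}$ together with elementary binomial asymptotics and dominated convergence, i.e.\ pointwise convergence of the pmf (Scheff\'e). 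Both identify the parity-conditioned Negative Binomial limit; yours is more elementary, the paper's avoids the explicit coefficient formula. Part (2) is identical to the paper (reduction to Theorem \ref{thm dec} with $k=2$, $q(n)\equiv 1$; you additionally verify the Rayleigh identification, which the paper only asserts). Part (3) is essentially the paper's argument: the meromorphic perturbation of the pole $\rho(u)=uq/(u^2q^2+1)$ is exactly what \cite[Thm.~IX.9]{FlSe09} packages; your appeal to Hwang's quasi-powers theorem needs the asymptotic $F_n(Q)\sim A(Q)\rho(Q)^{-n}$ to hold uniformly for $Q$ in a complex neighborhood of $q$, which the meromorphic scheme supplies, so you should state that uniformity rather than only the pointwise asymptotic.

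One discrepancy to be aware of: your $h(s)=\log\frac{qe^s+q^{-1}e^{-s}}{q+q^{-1}}$ gives $h''(0)=\frac{4q^2}{(q^2+1)^2}$, not the $\frac{4q^2}{q^2+1}$ appearing in the theorem and in the paper's evaluation of $f''(1)+f'(1)-(f'(1))^2$. Indeed, with $f(u)=\rho(1)/\rho(u)=\frac{u^2q^2+1}{u(q^2+1)}$ one has $f''(1)+f'(1)=\frac{2}{q^2+1}+\frac{q^2-1}{q^2+1}=1$, hence $f''(1)+f'(1)-(f'(1))^2=\frac{4q^2}{(q^2+1)^2}$, agreeing with your value (and with the sanity check that the variance per letter should vanish as $q\to\infty$). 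So your argument, carried out, proves the central limit theorem with normalization $\sqrt{\tfrac{4q^2}{(q^2+1)^2}\,n}$; the constant $\tfrac{4q^2}{q^2+1}$ appears to be an algebraic slip in the paper rather than a gap in your proof, but you should note explicitly that your proof yields this corrected scaling.
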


For $\sigma \in \{321, 132, 213\}$, a similar phase transition was observed in \cite{CP25} for fixed point biased permutations (not involutions), but in that case the phase transition happened at $q=3$.  In contrast, in our case the uniform case $q=1$ is the critical case.

\begin{theorem}\label{thm 231}
Fix $\sigma \in \{231, 312\}$.  Suppose that $\Pi_n$ is a random element of $\ivn{\sigma}$ distributed according to $\P^{q,\sigma}_n$.  Then
\[ \frac{\fp(\Pi_n) - \frac{q}{\sqrt{q^2+8}}n}{ \sqrt{\frac{8q}{(8+q^2)^{3/2}}n} } \rightarrow_d Z\]
where $Z$ has a standard normal distribution.
\end{theorem}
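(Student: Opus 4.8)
The plan is to reduce to a single pattern, find a recursive decomposition of the relevant involution class, read off a rational bivariate generating function tracking length and fixed points, and then apply singularity analysis. Since an involution satisfies $\pi=\pi^{-1}$ and $231^{-1}=312$, we have $\ivn{231}=\ivn{312}$ as sets, so $\fp$ has the same law under $\P^{q,231}_n$ and $\P^{q,312}_n$, and it suffices to treat $\sigma=231$.

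First I would record the structure of a nonempty $\pi\in\ivn{231}$. Let $p=\pi^{-1}(n)$ be the position of the largest value; as $\pi$ is an involution this means $\pi(p)=n$ and $\pi(n)=p$. Avoiding $231$ forces every value in positions $1,\dots,p-1$ to be smaller than every value in positions $p+1,\dots,n$ (otherwise such a pair together with $n$ forms a $231$), so positions $1,\dots,p-1$ carry $\{1,\dots,p-1\}$ and positions $p+1,\dots,n$ carry $\{p,\dots,n-1\}$; the left block is then an arbitrary element of $\ivn{231}$ of length $p-1$. If $p=n$ then $n$ is a fixed point and $\pi$ restricted to $[n-1]$ is an arbitrary element of $\ivn{231}$. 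If $p<n$ then $(p\ n)$ is a two-cycle, and the "inner block" occupying positions $p+1,\dots,n-1$ with values $\{p+1,\dots,n-1\}$ must be an involution that, together with the value $p=\pi(n)$ at the last position, avoids $231$; since $p$ is the smallest value to the right of position $p$, any ascent inside the inner block creates a $231$, so the inner block is forced to be the unique decreasing involution on $n-1-p$ letters, which contributes exactly $\mathbf 1[\,n-p\text{ even}\,]$ fixed points and nothing else. (The converse—that any such assembly lies in $\ivn{231}$—is a routine case check on where a putative $231$ could sit.)

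Writing $F_n(q)=\sum_{\pi\in\ivn{231}}q^{\fp(\pi)}$ with $F_0(q)=1$, this decomposition yields
\[ F_n(q)=q\,F_{n-1}(q)+\sum_{p=1}^{n-1} q^{\mathbf 1[\,n-p\text{ even}\,]}\,F_{p-1}(q),\qquad n\ge 1. \]
Passing to $G(x,q):=\sum_{n\ge 0}F_n(q)x^n$ and resumming the inner sum by the parity of $n-p$ collapses the relation, and I expect the closed form
\[ G(x,q)=\frac{1-x^2}{1-qx-2x^2}, \]
which one can sanity check against $F_1=q$, $F_2=q^2+1$, $F_3=q^3+3q$, $F_4=q^4+5q^2+2$, and against $|\ivn{231}|=2^{n-1}$ at $q=1$. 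Under $\P^{q,231}_n$ the probability generating function of $\fp(\Pi_n)$ is $\E[u^{\fp(\Pi_n)}]=F_n(uq)/F_n(q)$.

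Finally, $G(\cdot,w)$ is rational with denominator $1-wx-2x^2=-2(x-\rho(w))(x-\bar\rho(w))$, where $\rho(w)=\tfrac14(-w+\sqrt{w^2+8})$ is the unique positive root and $|\rho(w)|<|\bar\rho(w)|$ for $w>0$; hence $x=\rho(w)$ is a simple dominant pole and $F_n(w)=c(w)\lambda(w)^n(1+O(\kappa^n))$ with $\lambda(w)=1/\rho(w)=\tfrac12(w+\sqrt{w^2+8})$, uniformly for $w$ in a complex neighbourhood of $q$. Hwang's quasi-powers theorem then gives that $\fp(\Pi_n)$ is asymptotically Gaussian with mean $n\,\phi'(1)+O(1)$ and variance $n\,(\phi'(1)+\phi''(1))+O(1)$, where $\phi(u)=\log\lambda(uq)$. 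Since $\lambda'(w)/\lambda(w)=(w^2+8)^{-1/2}$ one has $\phi'(u)=q(u^2q^2+8)^{-1/2}$, so $\phi'(1)=q/\sqrt{q^2+8}$ and $\phi''(1)=-q^3(q^2+8)^{-3/2}$, whence the variance coefficient is $q/\sqrt{q^2+8}-q^3/(q^2+8)^{3/2}=8q/(q^2+8)^{3/2}$, matching the statement, and the standardized convergence to $Z$ follows. The main obstacle is getting the combinatorial decomposition exactly right—in particular that the block strictly between the two-cycle $(p\ n)$ is forced to be the decreasing involution, which is precisely what pins down the "$2x^2$" term and hence the whole generating function; the analytic step is then entirely standard (and could instead be carried out by an explicit partial-fraction expansion of the rational function $G$).
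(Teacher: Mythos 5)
Your proposal is correct and follows essentially the same route as the paper: the same bivariate generating function $\frac{1-z^2}{1-2z^2-qz}$ (which the paper imports from its cited reference rather than re-deriving, and which covers both patterns since an involution equals its inverse), followed by a dominant simple-pole analysis — you invoke Hwang's quasi-powers theorem, the paper invokes Flajolet--Sedgewick Theorem IX.9, which is the meromorphic-scheme packaging of the same result — giving the identical mean coefficient $q/\sqrt{q^2+8}$ and variance coefficient $8q/(q^2+8)^{3/2}$. Your combinatorial decomposition (position of the value $n$, with the block strictly between the two-cycle $(p\ n)$ forced to be the decreasing involution) is a correct, self-contained derivation of that generating function, but it substitutes for a citation rather than changing the method.
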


\section{Proofs}
An important observation that we will use in our proofs is that $ \P^{q,\sigma}_{n}$ is absolutely continuous with respect to $ \P^{1,\sigma}_{n}$ density 
\begin{equation}\label{eq dens} \P^{q,\sigma}_{n}(d\pi) = \frac{q^{\fp(\pi)}}{ \E^{1,\sigma}_n \left[q^{\fp(\Pi)}\right]} \P^{1,\sigma}_{n}(d\pi).\end{equation}

The first few results follow quickly from Equation \eqref{eq dens}, the results of \cite{mrs}, and the continuous mapping theorem.  The proofs of Theorems \ref{thm phase} and \ref{thm 231} require more calculation, but are based on standard techniques from analytic combinatorics.

\begin{proof}[Proof of Theorem \ref{thm inc}]
This follows immediately from Equation \eqref{eq dens} and \cite[Theorem 2]{mrs} since $\fp(\Pi_{n}) \leq k$ (in particular, it is bounded).
\end{proof}

\begin{proof}[Proof of Theorem \ref{thm dec}]
This follows immediately from Equation \eqref{eq dens} and \cite[Theorem 1]{mrs} since $q <1$ (in particular, $f(x)=q^x$ for $x\geq 0$ it is bounded and continuous).
\end{proof}

\begin{proof}[Proof of Theorem \ref{thm phase}]
When $q=1$ this is a direct consequence of Theorem \ref{thm dec}.  

For $q\neq 1$, we will approach the problem using analytic combinatorics.  From \cite{mrs}, the bivariate generating function of involutions avoiding $\sigma$ marked by their number of fixed points is
\[ G(z,q) = \sum_n \sum_{\pi \in \ivn{\sigma}} q^{\fp(\pi)} z^n = \frac{2}{1-2qz+\sqrt{1-4z^2}}.\]
Recall that $[z^n]G(z,q) $ denotes the coefficient of $z^n$ when treating $G(z,q)$ as a formal power series in $z$, that is, 
\[ G(z,q) = \sum_{n=0}^\infty ([z^n]G(z,q)) z^n.\]
To understand the asymptotics of this function, we must understand its set of singularities.  Due to the square root function, there are singularities at $z=\pm 1/2$.  Furthermore, we see that subject to $q>0$ and $|z|\leq 1/2$, the equation
\[ 1-2qz+\sqrt{1-4z^2} =0,\]
only has a solution for $z$ when $q\geq 1$, in which case the solution is 
\[ \zeta =  \frac{q}{q^2+1}.\]
Note that when $q=1$, $|\zeta|=1/2$ while $|\zeta|<1/2$ when $q>1$.  Thus the dominant singularity of $G$ is at $z=\pm 1/2$ for $q\leq 1$ and at $z=\zeta$ for $q>1$.  The singularity structure is the most complicated when $q=1$, but fortunately for us, we have already addressed this case and need only work in the cases when $q\neq 1$.

Direct calculation shows that the probability generating function of $\fp(\Pi_n)$ is 
\[ p_n(u) =  \E^{q,\sigma}_n [ u^{\fp(\Pi_n)}]  = \frac{ [z^n]G(z,uq) }{ [z^n]G(z,q)}. \]

We now consider the case when $q<1$.  For any $0<\eta<1$, we define $H_\eta(z) = G(z,\eta)$.  Direct calculation shows that at $z=1/2$
\[ H_\eta(z)  - \frac{2}{1-\eta}  \sim \frac{-2\sqrt{2}}{(1-\eta)^2} \sqrt{1-2x}\]
and at $z=-1/2$
\[ H_\eta(z)  - \frac{2}{1+\eta}  \sim \frac{-2\sqrt{2}}{(1+\eta)^2} \sqrt{1+2x}.\]
Thus is follows from \cite[Theorem VI.5]{FlSe09} (and the remark thereafter) that 
\[ [z^n]H_{\eta}(z) \sim  \frac{2\sqrt{2}}{(1-\eta)^2}  2^n \left(\frac{1}{2\sqrt{\pi n^3}}\right) + \frac{2\sqrt{2}}{(1+\eta)^2} 2^n (-1)^n \left(\frac{1}{2\sqrt{\pi n^3}}\right). \]
Consequently,
\[  p_n(u) = \frac{[z^n]H_{uq}(z)}{[z^n]H_{q}(z)} \sim \frac{\frac{1}{(1-uq)^2}  + (-1)^n\frac{1}{(1+uq)^2}   }{\frac{1}{(1-q)^2}  +(-1)^n \frac{1}{(1+q)^2} }\]
Consequently
\[ p_{2n}(u) \to \frac{\frac{1}{(1-uq)^2}  + \frac{1}{(1+uq)^2}   }{\frac{1}{(1-q)^2}  + \frac{1}{(1+q)^2} } . \]
The function on the right hand side is the probability generating function of a Negative Binomial distribution with parameters $p=1-q$ and $r=2$ conditioned to be even.  Similarly
\[ p_{2n+1}(u) \to \frac{\frac{1}{(1-uq)^2}  - \frac{1}{(1+uq)^2}   }{\frac{1}{(1-q)^2}  - \frac{1}{(1+q)^2} } ,\]
and the function on the right hand side is the probability generating function of a Negative Binomial distribution with parameters $p=1-q$ and $r=2$ conditioned to be odd.

For $q>1$, we check the hypotheses of \cite[Theorem IX.9]{FlSe09}.  In order for our notation to match that of \cite[Theorem IX.9]{FlSe09}, we define $F(z,u) = G(z,uq)$, so that
\[ p_n(u) =\frac{ [z^n]F(z,u) }{ [z^n]F(z,1)}.\]
Note that there is an $r$ such that $F(z,1)$ is meromorphic on a ball of radius $r$ centered at the origin whose only pole is a simple pole at $\rho = q/(q^2+1)$.  By rationalizing the denominator, we can write
\[ F(z,u) = \frac{1-2uqz-\sqrt{1-z^2}}{2z(z(u^2q^2+1)-uq)} = \frac{\frac{1-2uqz-\sqrt{1-4z^2}}{2z}}{z(u^2q^2+1)-uq}.\]
Letting 
\[ B(z,u) = \frac{1-2uqz-\sqrt{1-4z^2}}{2z} \quad \textrm{and} \quad C(z,u)=z(u^2q^2+1)-uq), \]
we have that $F(z,u) = B(z,u)/C(z,u)$.  Note that the singularity of $B$ at the origin is removable, so in fact $B$ defines a bivariate analytic function on $|z|<1/2$.

Letting
\[ \rho(u) = \frac{uq}{u^2q^2+1}\]
we note that $C(\rho(u),u)=0$.  Define $f(u) = \rho(1)/\rho(u)$.  By direct calculation, we find that
\[ f'(1) = \frac{q^2-1}{q^2+1} \quad \textrm{and} \quad f''(1)+f'(1)-(f'(1))^2 = \frac{4q^2}{q^2+1}.\]
From \cite[Theorem IX.9]{FlSe09} it follows that 
\[  \E^{q,\sigma}_n [ \fp(\Pi_n)] = \frac{q^2-1}{q^2+1}n+O(1) \quad \textrm{and} \quad \Var(\fp(\Pi_n)) = \frac{4q^2}{q^2+1}n+O(1),\]
and 
\[ \frac{ \fp(\Pi_n) - \frac{q^2-1}{q^2+1}n}{\sqrt{ \frac{4q^2}{q^2+1}n}} \to Z\]
where $Z$ has a standard normal distribution.
\end{proof}

\begin{proof}[Proof of Theorem \ref{thm 231}]
From \cite{mrs}, the bivariate generating function of involutions avoiding $\sigma$ marked by their number of fixed points is
\[ G(z,q) = \sum_n \sum_{\pi \in \ivn{\sigma}} q^{\fp(\pi)} z^n = \frac{1-z^2}{1-2z^2-qz}.\]
Direct calculation shows that the probability generating function of $\fp(\Pi_n)$ is 
\[ p_n(u) =  \E^{q,\sigma}_n [ u^{\fp(\Pi_n)}]  = \frac{ [z^n]G(z,uq) }{ [z^n]G(z,q)}. \]
To prove our result, we check the hypotheses of \cite[Theorem IX.9]{FlSe09}.  In order for our notation to match that of \cite[Theorem IX.9]{FlSe09}, we define $F(z,u) = G(z,uq)$, so that
\[ p_n(u) =\frac{ [z^n]F(z,u) }{ [z^n]F(z,1)}.\]
Letting $B(z,u) = 1-z^2$ and $C(z,u) = 1-2z^2-quz$, we have that $F(z,u) = B(z,u)/C(z,u)$ where $B$ and $C$ are analytic.  Furthermore, letting
\[ \rho(u) =  \frac{-qu+ \sqrt{ q^2u^2 + 8}}{4} \quad \textrm{and}\quad r(u) =  \frac{-qu- \sqrt{ q^2u^2 + 8}}{4}\]
 we see that for $u$ close to $1$, $0<\rho(u)<|r(u)|$ and $C(z,u)$ factors as
\[ C(z,u) = \left( z -\rho(u)\right) \left( z- r(u)\right).\]
Consequently $C(z,1)$ has a simple root at $\rho(1)$ and $F(z,1)$ is meromorphic on a ball of radius $q/4$ centered at the origin whose only singularity in that ball is a simple pole at $\rho(1)$.  Furthermore, Note that $\rho(u)$ is a non-constant function that is analytic near $u=1$ and $C(\rho(u),u)=0$.  We may also directly verify that
\[  \partial_z C(\rho(1),1)\cdot\partial_uC(\rho(1),1)  \neq 0\]
and that, letting $f(u) = \rho(1)/\rho(u)$, we have that 
\[ f''(1)+f'(1) - (f'(1))^2 = \frac{8q}{(8+q^2)^{3/2}} \neq 0.\]
Consequently, \cite[Theorem IX.9]{FlSe09} implies that 
\[ \E[\fp(\Pi_n)] = f'(1) n+ O(1) = \frac{q}{\sqrt{q^2+8}} n+ O(1) \quad \textrm{and} \quad \Var[\fp(\Pi_n)] ] = \frac{8q}{(8+q^2)^{3/2}} n+O(1),\]
and
\[ \frac{\fp(\Pi_n) - \frac{q}{\sqrt{q^2+8}}n}{ \sqrt{\frac{8q}{(8+q^2)^{3/2}}n} } \rightarrow_d Z\]
where $Z$ has a standard normal distribution.
\end{proof}

\bibliographystyle{plain}
\bibliography{pattern.bib}

\end{document}